\def\be{\begin{equation}}
\def\ee{\end{equation}}
\def\bea{\begin{eqnarray}}
\def\eea{\end{eqnarray}}
\def\1{\'{\i}}                           
  \def\>#1{{\mathbf#1}}
\definecolor{verdeoscuro}{cmyk}{1,0,0.6,0.5}
\newtheorem{theorem}{Theorem}
\begin{document}

\thispagestyle{empty}

\hfill \today

\ 
\vspace{0.5cm}

\begin{center}

\LARGE{{

Integrable deformations of Rikitake systems, Lie bialgebras and bi-Hamiltonian structures}}

\end{center}

\begin{center} {\sc Angel Ballesteros$^1$, Alfonso Blasco$^1$, Ivan Gutierrez-Sagredo$^{2,3}$}
\end{center}

\begin{center} {$^1$\it{Departamento de F\1sica,  Universidad de Burgos, 
09001 Burgos, Spain} \\
$^2$\it{Departamento de Matem\'aticas y Computaci\'on,  Universidad de Burgos, 
09001 Burgos, Spain} \\
$^3$\it{Departamento de Matem\'aticas, Estad\'istica e Investigaci\'on Operativa,  Universidad de La Laguna, Spain}
}

e-mail: angelb@ubu.es, ablasco@ubu.es, igsagredo@ubu.es
\end{center}

  \medskip

\begin{abstract} 
Integrable deformations of a class of Rikitake dynamical systems are constructed by deforming their underlying Lie-Poisson Hamiltonian structures, which are considered linearizations of Poisson--Lie structures on certain (dual) Lie groups. By taking into account that there exists a one-to one correspondence between Poisson--Lie groups and Lie bialgebra structures, a number of deformed Poisson coalgebras can be obtained, which allow the construction of integrable deformations of coupled Rikitake systems. Moreover, the integrals of the motion for these coupled systems can be explicitly obtained by means of the deformed coproduct map. The same procedure can be also applied when the initial system is bi-Hamiltonian with respect to two different Lie-Poisson algebras. In this case, to preserve a bi-Hamiltonian structure under deformation, a common Lie bialgebra structure for the two Lie-Poisson structures has to be found. Coupled dynamical systems arising from this bi-Hamiltonian deformation scheme are also presented, and the use of collective `cluster variables', turns out to be enlightening in order to analyse their dynamical behaviour.  As a general feature, the approach here presented provides a novel connection between Lie bialgebras and integrable dynamical systems. 

\end{abstract}

\bigskip\bigskip
\noindent MSC: 37J35; %Completely integrable systems, topological structure of phase space, integration methods
34A26;   	%Geometric methods in differential equations
34C14;  	%Symmetries, invariants
17B62;   	%Lie bialgebras; Lie coalgebras
17B63   	%Poisson algebras

\bigskip

\noindent KEYWORDS: nonlinear dynamics, Rikitake system, ordinary differential equations, coupled systems, integrability, deformations, Poisson--Lie groups, Poisson coalgebras, Lie bialgebras, bi-Hamiltonian systems.

%\vfill
%\newpage 

%%%%%%%%%%%%%%%%%%%%%%%%%%%%%%%%%%%%%%%%%%%%%%%%%%

\tableofcontents

\section{Introduction}

It is well-known that for  any Lie algebra $(\mathfrak{g},[\,\cdot,\cdot\,]_{\mathfrak{g}})$ we can consider the dual vector space $\mathfrak g^*$ and the Poisson algebra of smooth functions
$C^\infty(\mathfrak g^*)$ endowed with the Poisson bracket
\be
\left\{ f, g \right\}(\xi)= \left\langle \xi, \left[df, dg \right] \right\rangle, \qquad \xi \in \mathfrak g^*, \quad f,g \in C^\infty(\mathfrak g^*),
\label{eq:LiePoisson11}
\ee
where $\langle \cdot, \cdot \rangle$ is the pairing between $\mathfrak{g}$ and $\mathfrak g^*$. 
This Lie--Poisson bracket is given by a set of fundamental Poisson brackets which are just the Poisson analogues of the Lie brackets defining $\mathfrak{g}$.

Let us now consider a generic $N$-dimensional (ND) dynamical system defined by the Lie-Poisson structure associated to a given finite-dimensional Lie algebra $\mathfrak{g}$, together with a Hamiltonian function $\cal{H}\in C^\infty(\mathfrak g^*)$. The Hamilton equations of motion of the system will be given by the system of ODEs 
\be
\dot x_i=\left\{ x_i, \cal{H} \right\},
\qquad
i=1,\dots, N  ,
\ee
where $x_i$ are coordinate functions on $\mathfrak g^*$. By construction, the set of Casimir functions ${\cal C}_i$ of the Lie-Poisson structure provides a set of integrals of the motion in involution for such a system, and level sets of the Casimir functions define the symplectic leaves of the Lie-Poisson structure \eqref{eq:LiePoisson11}.

The problem of finding integrable deformations of the system admits many solutions. A first obvious family of integrable deformations can be obtained by modifying the Hamiltonian $\cal{H}$ and by preserving the Lie-Poisson algebra \eqref{eq:LiePoisson11} and its Casimir functions ${\cal C}_i$. Indeed, more interesting integrable deformations will be the ones obtained by considering nontrivial deformations of the underlying Lie-Poisson structure together with the associated deformed versions of the Casimir functions.

In the three-dimensional (3D) case the latter construction is straightforward, since we recall that the algebra of smooth functions on $(x,y,z)$ endowed with the bracket
\be
\{x,y\}=f\,\frac{\partial F}{\partial z}, \qquad
\{y,z\}=f\,\frac{\partial F}{\partial x}, \qquad
\{z,x\}=f\,\frac{\partial F}{\partial y},
\label{3dpois}
\ee
define always a Poisson structure for any choice of the smooth functions $f(x,y,z)$ and $F(x,y,z)$, and the Casimir function for the Poisson structure is just $F(x,y,z)$~\cite{Fokas}. Therefore if we consider an initial 3D Hamiltonian system defined by a Hamiltonian function $\mathcal{H}$ and a pair of functions $(f,F)$, any deformed Casimir function $F_\eta(x,y,z)$ such that $\lim_{\eta\to 0} F_\eta(x,y,z) = F(x,y,z)$ provides a deformation of the bracket~\eqref{3dpois} that, by construction, will generate an integrable deformation of the system defined by $\mathcal{H}$. Indeed, $\mathcal{H}$ could be further modified in terms of new parameters, and this secondary deformation (in terms of the Hamiltonian) would  superpose to the previous one by preserving integrability.

The aim of this paper is to show that, among this infinite zoo of possible integrable deformations, there exists a 
distinguished and very restricted subclass of deformations that preserves the additional Poisson coalgebra symmetry that exists for any Lie-Poisson dynamical system, which was introduced in~\cite{BR} and is based on the existence of a coproduct map that endows the Lie-Poisson algebra with a Poisson coalgebra structure. In this framework, the integrable deformation arises as a consequence of the existence of a compatible deformed version of the coproduct map, which is just the group law of a dual non-abelian Poisson-Lie group. Moreover, the essential property of such Poisson coalgebra deformations is the fact that the deformed coproduct map provides coupled pairs of deformations of the initial system in a systematic and constructive way, which was introduced in~\cite{BR, BRcluster,jpcs, loopsAIP}, latter applied to 3D Lotka-Volterra systems in~\cite{LVpla} and further generalised to some bi-Hamiltonian systems in~\cite{JDE16,BMR}. In this framework, the deformed coproduct map of the deformed Casimir functions will give rise to the integrals of the motion of the deformed system, thus providing its complete integrability structure in the Liouville sense.

In this paper we complete this approach by analysing explicitly the very restricted number of integrable deformations of a given relevant type of dynamical systems (the Rikitake ones) that are endowed with a deformed Poisson coalgebra symmetry. Through this specific example we will show that all possible Poisson coalgebra deformations for a single Lie-Poisson system can be explicitly obtained and are in one to one correspondence with the nonequivalent classes of  Lie bialgebra structures of the dual Lie algebra $\mathfrak{g}^\ast$ (in the sense of Lie bialgebra theory) that is associated to the initial Lie algebra $\mathfrak g$. This fact is based on the well-known result by Drinfel'd that states that Poisson-Lie structures on a given Lie group are in one to one correspondence with Lie bialgebra structures on its Lie algebra~\cite{Drinfeld1983hamiltonian}.
As a consequence we show that, in general, the Lie bialgebra classification problem for a given Lie algebra (which is essential in the theory of quantum deformations of Lie algebras and groups~\cite{Dri, CP}) turns out to be also relevant in dynamical systems theory. 

Moreover, through the examples here presented and by making use of the so-called `cluster variables'~\cite{BRcluster} we will get a deeper insight into the dynamical properties of the coupled integrable deformations of the Rikitake systems that will be obtained through Poisson coalgebra deformations. In particular, these variables provide a neat picture of the coupled system dynamics as a set of global `collective' variables with the same dynamics of the uncoupled system plus another set of variables which encodes the coupling arising from each Lie bialgebra structure. Also, we will illustrate the strong compatibility constraints that arise when trying to obtain Poisson coalgebra deformations that preserve the bi-Hamiltonian structure that exist for certain Lie-Poisson Rikitake systems.

The paper is organized as follows. In the next Section the class of integrable Rikitake-type systems is reviewed, together with the Lie-Poisson Hamiltonian structures that underly their integrability. Specifically, we will select the three types of integrable Rikitake systems that will be considered in the paper (called A, B and AB). In Section 3, the interpretation of Poisson-Lie groups as (coalgebra) deformations of Lie-Poisson algebras is presented, and the construction will be illustrated through the specific case of the Lie-Poisson (1+1) Poincar\'e algebra, which will be the relevant one in the rest of the paper. In Section 4, the Poincar\'e Poisson coalgebra symmetry of an integrable Rikitake system of type A will be presented. Furthermore, we prove that bi-Hamiltonian deformations with Poisson coalgebra symmetry does not exist for the Rikitake system of type B. Finally, a Poisson-Lie group structure suitable for the construction of a bi-Hamiltonian deformation for the Rikitake AB system will be presented. In Section 5, we compute the integrable deformations of the Rikitake system arising from the Poisson-Lie structures constructed in Section 4. In Section 6, the coupled systems coming from the Poisson-Lie symmetry for the Rikitake AB system will be explicitly obtained. Finally, a section including some comments and open problems closes the paper.

%%%%%%%%%%%%%%%%%%%%%%%%%%%%%%%%%%%%%%%%%%%%%
%%%%%%%%%%%%%%%%%%%%%%%%%%%%%%%%%%%%%%%%%%%%%%%%%%
\section{Lie-Poisson algebras for Rikitake systems}
\label{sec:LiePoisson}
\setcounter{equation}{0}

The Rikitake system~\cite{Rikitake}, which describes the dynamics of  two connected identical frictionless disk dynamos, is a well-known model of the (aperiodic) Earth's geomagnetic field reversals. Following~\cite{Valls}, we consider the generalized Rikitake dynamical system given by
 \begin{equation}
\begin{array}{lll}
\dot{x}=-\mu x+y (z + \beta_1), &\quad 
\dot{y}=-\mu y+ x(z-\beta_2), & \quad 
\dot{z}=\alpha-x y ,
\end{array}\label{Rikigen}
\end{equation}
where $\mu,\beta_1,\beta_2$ and $\alpha$ are real parameters (the original Rikitake system~\cite{Rikitake} is the one with $\beta_2=0$).
The integrability properties of the system~\eqref{Rikigen} have been thouroughly studied~\cite{Tudoran, llibre1, llibre2} (see also~\cite{LB, Lazureanu, Brasilenos, GuptaYadav, RTudoran} and references therein). Moreover, (integrable) deformations of the Rikitake system have attracted a considerable attention (see \cite{LazureanuBinzar,Lazureanu2,Huang,Lazureanu3}). 

There exist two distinguished families of integrable cases for this dynamical system (A and B), where $\mu=0$ arises always as a common integrability condition. In the sequel we describe both integrable families together with their associated Lie-Poisson algebras, and we sketch the ways in which integrable deformations for them can be constructed.

\subsection{Case A}

When $\alpha\neq 0$ and $\beta_1=\beta_2=\mu=0$ the system 
 \begin{equation}
\begin{array}{lll}
\dot{x}= y \,z , &\quad 
\dot{y}=x\,z, & \quad 
\dot{z}=\alpha-x y ,
\end{array}\label{Rikicaso1}
\end{equation}
has a Lie--Poisson Hamiltonian structure provided by the Hamiltonian function
\be
\mathcal{H}^A=\dfrac{1}{2}(x^{2}+z^{2})-\alpha\,\log (x+y),
\label{ham1}
\ee
and the Poisson structure given by the Lie-Poisson (1+1) Poincar\'e algebra~\cite{Tudoran, LB}
\be
\{x,y\}=0, \qquad
\{x,z\}=y, \qquad
\{y,z\}=x .
\label{alg1}
\ee
Therefore, the dynamical system~\eqref{Rikicaso1} is recovered as Hamilton's equations, namely
\be
\begin{array}{llll}
\dot{x}=\{x,\mathcal{H}^A\}=y\, z, &\quad
\dot{y}=\{y,\mathcal{H}^A\}=x\, z, &\quad
\dot{z}=\{z,\mathcal{H}^A\}=\alpha-x\, y.
\end{array}\label{IR}
\ee
Both the Hamiltonian~\eqref{ham1} and the Casimir function of the Lie-Poisson (1+1) Poincar\'e algebra given by
\begin{equation}
\mathcal C^A=\frac14 \left( y^{2}-x^{2}	\right) 
\label{eq:casA}
\end{equation}
are integrals of the motion for this system. 

The orbits of \eqref{Rikicaso1} are contained in the level sets of the Casimir function \eqref{eq:casA}, $\mathcal C^A = k$. For any fixed value $k>0$, the simplectic realization
\be
\begin{array}{lll}
x=2 \sqrt{k}\sinh q, & y= 2 \sqrt{k} \cosh q, & z=p.
\end{array}
\label{eq:symplecAbook}
\ee
defines a map from the whole Poisson manifold to a given simplectic leaf, and allows us to write the Hamiltonian \eqref{ham1} in terms of the canonical variables $(q,p)$, namely
\begin{equation}
\mathcal H^A = \frac{p^2}{2} + 2 k \sinh^2 q - \alpha \left( \log \left( 2 \sqrt k \right) + q \right) \, ,
\end{equation}
which is a natural Hamiltonian system. Therefore, for a given energy $\mathcal H^A  = E$, the dynamics of the system can be written as
\begin{equation}
t-t_0 = \int_{q_0}^q \frac{\mathrm d s}{\sqrt{2 \left( E - 2 k \sinh^2 s +2 (\log(2 \sqrt k) + s)\right)}} .
\end{equation}

\subsection{Case B}
 When $\alpha=\mu=0$ the system is Liouville integrable provided that  $\beta_1=\beta_2=\beta \neq 0$ \cite{llibre1, llibre2}, namely
 \begin{equation}
\begin{array}{lll}
\dot{x}=y (z + \beta), &\quad 
\dot{y}= x(z-\beta), & \quad 
\dot{z}=-x y ,
\end{array}\label{Rikicaso2}
\end{equation}
and this is the case studied in~\cite{Lazureanu}. Moreover, its integrability is provided by a bi-Hamiltonian structure in terms of two four-dimensional (hereafter 4D) Poisson--Lie algebras: the centrally extended (1+1) Poincar\'e algebra and the centrally extended $so(3)$ algebra. 

Explicitly, the first Hamiltonian structure of the system~\eqref{Rikicaso2} is given by the non-trivial central extension of the Poincar\'e Lie-Poisson algebra given by
\be
\{x,y\}_0=2\,\beta\,I ,\qquad 
\{y,z\}_0=x, \qquad
\{z,x\}_0=-y, \qquad
\{ I ,\cdot \}_0=0, 
\label{extendedP}
\ee
together with the Hamiltonian
\be
\mathcal{H}_0=\frac14 (x^2 + y^2 + 2\,z^2).
\ee
The second constant of the motion is given by the quadratic Casimir of~\eqref{extendedP} (note that $I$ is also a trivial Casimir function), namely
\be
\mathcal{C}_0=-\frac14 \left(\frac{x^2 - y^2}{I} +4\,\beta\,z\right).
\ee

The second Hamiltonian structure for~\eqref{Rikicaso2} is given by the 4D centrally extended $so(3)$  Lie--Poisson algebra
\be
\{x,y\}_1=2\,z, \qquad
\{y,z\}_1=x, \qquad
\{z,x\}_1=y, \qquad
\{ I ,\cdot \}_1=0, 
\label{extendedso3}
\ee
together with the Hamiltonian
\be
\mathcal{H}_1=-\frac14 (x^2 - y^2 +4\,\beta\,z).
\ee
Again, the second non-trivial constant of the motion is given by the quadratic Casimir of~\eqref{extendedso3}, namely
\be
\mathcal{C}_1=\frac14 (x^2 + y^2 + 2\,z^2).
\ee

As it is usual in dynamical systems endowed with bi-Hamiltonian structures, $\mathcal{H}_0\equiv \mathcal{C}_1$ and  $\mathcal{H}_1\equiv \mathcal{C}_0$. Moreover, the linear Poisson structures $\{\cdot, \cdot\}_0$ and $\{\cdot, \cdot\}_1$ on $\mathbb{R}^4$ turn out to be compatible, in the sense that we can define a one-parametric family of Lie-Poisson structures (a Poisson pencil)
\begin{equation}
\quad\{.,.\}_{\lambda}=(1-\lambda)\{.,.\}_{0}+\lambda\{.,.\}_{1},\quad \mbox{ with } \lambda \in \mathbb{R} ,
\end{equation}
whose explicit Poisson brackets are given by
\be
\{x,y\}_\lambda=(1-\lambda)\,2\,\beta\,I +\lambda\,2\,z
\,\qquad
\{y,z\}_\lambda=x
, \qquad
\{z,x\}_\lambda=(-1+2\,\lambda)\, y
, \qquad
\{ I ,\cdot \}_\lambda=0.
\label{pencilb}
\ee
Therefore, the approach introduced in~\cite{BMR} (see also \cite{GPMPR}) shows that for each common 1-cocycle for the 4D Lie algebras~\eqref{extendedP} and~\eqref{extendedso3} we could, in principle, construct an integrable bi-Hamiltonian deformation of the system. However, in this particular case, we will show in Section 4.2 that such a common cocycle does not exist. Nevertheless, the usual integrable deformation procedure that we apply to the case A can be also used in this case, thus obtaining a different deformation for each of the Lie--Poisson structures.

\subsection{Case AB}

Nevertheless, when exploring the possible common cocycles for the Poisson pencil~\eqref{pencilb} we realize that the case $\beta=0$ does admit such a bi-Hamiltonian structure with a common cocycle (note that this is also Case A with $\alpha=0$). This system, namely
 \begin{equation}
\begin{array}{lll}
\dot{x}=y \,z , &\quad 
\dot{y}= x\,z , & \quad 
\dot{z}=-x \, y ,
\end{array}\label{Rikicaso3}
\end{equation}
admits the bi-Hamiltonian description 
 \begin{equation}
\begin{array}{lll}
\dot{x}= \{ x, \mathcal H_0 \}_0 = \{ x, \mathcal H_1 \}_1 , &\quad 
\dot{y}= \{ y, \mathcal H_0 \}_0 = \{ y, \mathcal H_1 \}_1 , & \quad 
\dot{z}= \{ z, \mathcal H_0 \}_0 = \{ z, \mathcal H_1 \}_1 ,
\end{array}
\end{equation}
where the first Hamiltonian function and Poisson brackets read 
\be
\mathcal{H}_{0}^{AB}=\dfrac{1}{4}(x^2+y^2+2 z^2),
\label{eq:H_AB0}
\ee
\be
\begin{array}{lll}
\{x,y\}_{0}=0, & \{x,z\}_{0}=y, & \{y,z\}_{0}=x,
\end{array}\label{Poinc}
\ee
while the second ones are given by
\be
\mathcal{H}_{1}^{AB}=\dfrac{1}{4}(y^2-x^2),
\label{eq:H_AB1}
\ee
\be
\begin{array}{lll}
\{x,y\}_{1}=2z, & \{x,z\}_{1}=-y, & \{y,z\}_{1}=x .
\end{array}\label{so3}
\ee
Then, by following~\cite{BMR} we will construct a bi-Hamiltonian deformation of \eqref{Rikicaso3} by means of the Poisson pencil
\be
\{x,y\}_\lambda=2\,\lambda\,z
\,\qquad
\{y,z\}_\lambda=x
, \qquad
\{z,x\}_\lambda=(-1+2\,\lambda)\, y\, .
\label{pencil2}
\ee
Also, the integrable coupled systems arising from the coalgebra approach will be explicitly presented. Note that the Casimir function for (\ref{pencil2}) has the following expression:
\be
\mathcal{C}_{\lambda}=\dfrac{x^2+(2\,\lambda-1)y^2+2\,\lambda\,z^2}{8\,\lambda-4} .
\ee

For any $\lambda \neq \frac12$, we have the following one-parameter family of simplectic realizations 
\begin{equation}
x=\sqrt{4\,k+2\,\lambda\, p^2}\,\sinh(\sqrt{1-2\lambda}\,q), \qquad y=\sqrt{\dfrac{4\,k+2\,\lambda\,p^2}{1-2\,\lambda}}\,\cosh(\sqrt{1-2\lambda}\,q), \qquad z=p .
\label{eq:simplectic_real_AB}
\end{equation}
This allows us to formally compute the trajectories of the system in two different ways. For the first one, corresponding to $\lambda = 0$ in \eqref{eq:simplectic_real_AB}, we have that the Hamiltonian \eqref{eq:H_AB0} reads
\begin{equation}
\mathcal{H}_{0}^{AB} = \frac{p^2}{2} + k \cosh(2 q) \, ,
\end{equation}
which has again the shape of a natural Hamiltonian system. When $\mathcal{H}_{0}^{AB} = E$, its trajectories will be given by
\begin{equation}
t-t_0 = \int_{q_0}^q \frac{\mathrm d s}{\sqrt{2 \left( E - k \cosh (2 s) \right)}} .
\end{equation}
For the second one, corresponding to setting $\lambda = 1$ in \eqref{eq:simplectic_real_AB}, the symplectic realization of the Hamiltonian \eqref{eq:H_AB1} is given by
\begin{equation}
\mathcal{H}_{1}^{AB} = - \cos(2 q) \left( \frac{p^2}{2} + k \right) .
\end{equation}
For fixed energies $\mathcal{H}_{1}^{AB} = E$, the formal solution would be given by
\begin{equation}
t-t_0 = \int_{q_0}^q \frac{\mathrm d s}{\sqrt{-2 \cos (2 s) \left( E + k \cos (2 s) \right)}} .
\end{equation}

%%%%%%%%%%%%%%%%%%%%%%%%%%%%%%%%%%%%%%%%%%%%%%%%
\section{Lie bialgebras: the $(1+1)$ Poincar\'e case}
\setcounter{equation}{0}

A well-known result by Drinfel'd \cite{Drinfeld1983hamiltonian} ensures that Poisson-Lie (PL) structures on a simply connected Lie group $G$ are in one-to-one correspondence with Lie bialgebra structures $\left(\mathfrak{g},\delta\right)$ on $\mathfrak{g}=\mbox{Lie}\left(G\right)$, where the linearization of the PL structure in terms of the local coordinates on $G$ is a Lie algebra whose dual is the map $\delta$. 
More explicitly, a Lie bialgebra $(\mathfrak{g},\delta)$ is a Lie algebra $\mathfrak{g}$ with structure tensor $c^k_{ij}$
\be
[X_i,X_j]=c^k_{ij}X_k ,
\label{liealg}
\ee
together with a skewsymmetric cocommutator map
$
\delta:{\mathfrak{g}}\to {\mathfrak{g}}\otimes {\mathfrak{g}}
$
fulfilling the two following conditions:
\begin{itemize}
\item i) $\delta$ is a 1-cocycle, {\em  i.e.},
$$
\delta([X,Y])=[\delta(X),\,  Y\otimes 1+ 1\otimes Y] + 
[ X\otimes 1+1\otimes X,\, \delta(Y)] ,\qquad \forall \,X,Y\in
\mathfrak{g}.
\label{1cocycle}
$$
\item ii) The dual map $\delta^\ast:\mathfrak{g}^\ast\otimes \mathfrak{g}^\ast \to
\mathfrak{g}^\ast$ is a Lie bracket on $\mathfrak{g}^\ast$.
\end{itemize}
Therefore any cocommutator $\delta$ will be of the form 
\be
\delta(X_i)= f^{jk}_i\,X_j \otimes X_k \, ,
\label{precoco}
\ee
where $f^{jk}_i$ is the structure tensor of the dual Lie algebra $\mathfrak{g}^\ast$ defined by
\be
[\hat x^j,\hat x^k]=f^{jk}_i\, \hat x^i \, ,
\label{dualL}
\ee
where $\langle  \hat x^j,X_k \rangle=\delta_k^j$. Note that, in general, the Lie algebra $\mathfrak{g}^\ast$ is not isomorphic to $\mathfrak{g}$. Moreover, the dual Lie bialgebra $\mathfrak{g}^\ast$ is naturally equipped with a Lie bialgebra structure $(\mathfrak{g}^\ast,\delta^\ast)$ since the Lie algebra structure on $\mathfrak{g}$ gives rise to the dual cocommutator map
$\delta^\ast: \mathfrak{g}^\ast\rightarrow \mathfrak{g}^\ast\otimes \mathfrak{g}^\ast$, namely
\be
\delta^\ast(\hat x^k)= c_{ij}^k\,  \hat x^i \otimes \hat x^j. 
\ee
Lie bialgebras are the tangent counterpart of Poisson-Lie groups \cite{CP} in an similar way that Lie algebras are the tangent structures associated to Lie groups. In our case, the extra Poisson structure on the Lie group $G$ induces the cocommutator map $\delta$ on $T_e G \simeq \mathfrak g$. This implies that, for any dynamical system that admits a Hamiltonian formulation in terms of a linear Poisson structure (Lie-Poisson structure), each possible bialgebra structure for the associated Lie algebra gives rise to a deformed Hamiltonian system. The remarkable properties of deformed systems so constructed stem from the fact that the algebra of smooth functions $\mathcal C^\infty(G)$ on the Poisson-Lie group $G$ is automatically endowed with a Poisson-Hopf algebra structure, characterized by the coproduct map $\Delta : \mathcal C^\infty(G) \to \mathcal C^\infty(G) \otimes \mathcal C^\infty(G)$. This implies that deformed systems constructed in this way are very rigid, compared to arbitrary deformations of the Poisson structure and/or the Hamiltonian. However, this Poisson-Hopf structure automatically guarantees that canonically coupled systems can be constructed by means of the coproduct map $\Delta$. For a detailed explanation of all these results, see~\cite{JDE16,BMR} and references therein.

Let us now consider the $(1+1)$-dimensional Poincar\'e Lie algebra
\be
\begin{array}{lll}
[X,Y]=0, & [X,Z]=Y, & [Y,Z]=X,
\end{array}
\label{eq:PoincareLie}
\ee
which is relevant for Case A, Case B (when the central extension is considered) and Case AB. The full classification of nonisomorphic Lie bialgebra structures for this algebra is given in~\cite{Gomez2000} and leads to seven non-isomorphic families of cocommutator maps. Out of them, we will consider the trivial one whose dual Lie algebra is Abelian, and two non-trivial cocommutators, one with nilpotent and one with solvable dual Lie algebras. Explicitly:

\begin{itemize}

\item The trivial Lie bialgebra structure $\delta(X)=\delta(Y)=\delta(Z)=0$ with dual Abelian Lie algebra.

\item The Lie bialgebra
\be
\delta(X)= \eta X\wedge Z,
\qquad
\delta(Y)= \eta Y\wedge Z,
\qquad
\delta(Z)=0,
\label{eq:bookdelta}
\ee
with dual Lie algebra isomorphic to the `book'\, Lie algebra \cite{BBMbook}
\be
\begin{array}{lll}
[\hat x,\hat y]=0, &
[\hat x,\hat z]=\eta\, \hat x, &
[\hat y,\hat z]=\eta\, \hat y. 
\end{array} \label{dalg1}
\ee

\item The Lie bialgebra
\be
\delta(X)=0,\qquad
\delta(Y)=0,\qquad
\delta(Z)=\eta\, X\wedge Y
\label{h3}
\ee
with dual Lie algebra isomorphic to the Heisenberg-Weyl Lie algebra, namely
\be
[\hat x,\hat y]=\eta\,\hat z, \qquad
[\hat x,\hat z]=0, \qquad
[\hat y,\hat z]=0. 
\label{dalgh3}
\ee

\end{itemize}

This means that the three dual Lie bialgebras $(\mathfrak{g}^\ast,\delta^\ast)$ will have the same dual cocommutator map $\delta^*$ arising from the Poincar\'e Lie algebra \eqref{eq:PoincareLie}, namely
\be
\delta^*(\hat x)=\hat y \wedge \hat z,\qquad
\delta^*(\hat y)=\hat x \wedge \hat z,\qquad
\delta^*(\hat z)=0. 
\label{dualP}
\ee
Therefore, the PL structures for the three dual groups will have the Poincar\'e Lie algebra as their linearization and therefore the two PL structures coming from nontrivial cocommutators can be considered as Poisson-Hopf algebra deformations of the Poincar\'e Lie-Poisson algebra.

%%%%%%%%%%%%%%%%%%%%%%%%%%%%%%%%%%%%%%%%%%%%%
\section{Poisson-Lie deformations and Lie bialgebras}
\setcounter{equation}{0}

In this Section we study the Poisson-Hopf algebras relevant to the different deformations of the Rikitake systems that we are considering. In particular, for Case A we explicitly compute both of the non-trivial Poisson-Hopf algebras associated to the two non-trivial $(1+1)$-dimensional Poincar\'e Lie bialgebras previously introduced and show explicitly that in the limit $\eta \to 0$ we recover the Poisson version of \eqref{eq:PoincareLie}. Afterwards, we study the possible bi-Hamiltonian deformations: we prove that Case B does not admit a bi-Hamiltonian deformation by showing that the Poisson pencil \eqref{pencilb} does not admit a common cocycle, and finally, we explicitly construct a bi-Hamiltonian deformation for Case AB.

\subsection{Case A: two non-equivalent Poisson-Hopf algebras from Lie bialgebras}

Let us start by explicitly computing the Poisson-Hopf algebras associated to the Lie bialgebras \eqref{eq:bookdelta} and \eqref{h3}, which will result in integrable deformations of the Case A system.

\subsubsection{The `book' group deformation}

The first step to be performed is the construction of the Lie group $G^*$, with $\mathfrak{g}^{*}=\text{Lie}(G^{*})$. Thus, we start by exponentiating a faithful representation of $\mathfrak{g}^{*}$ \eqref{dalg1}. 
Taking, for instance, the adjoint representation we get
\be
\begin{array}{lll}
\varrho (\hat x)=\begin{pmatrix}
0 & 0 & \eta \\
0 & 0 & 0\\
0 & 0  & 0
\end{pmatrix} &
\varrho (\hat y)=\begin{pmatrix}
0 & 0 & 0\\
0 & 0 & \eta \\
0 & 0  & 0
\end{pmatrix}&
\varrho (\hat z)=\begin{pmatrix}
-\eta & 0 & 0\\
0 & -\eta & 0 \\
0 & 0  & 0
\end{pmatrix}.
\end{array}\label{adj}
\ee
By using \eqref{adj} we can parametrize a generic element of the group $G^*$ in terms of the  $(x,y,z)$-coordinates, namely
\be
\mathcal{G}_{1}=
\exp(z\, \varrho(\hat z))  \exp (y\, \varrho(\hat y)) \exp (x\, \varrho(\hat x))=
\begin{pmatrix}
e^{-\eta\, z} & 0 & \eta\, x e^{-\eta z}\\
0 & e^{-\eta\, z} & \eta\, y e^{-\eta z} \\
0 & 0  & 1
\end{pmatrix},
\label{eq:Gbook}
\ee
and the multiplication rule between two group elements reads
\be
\mathcal{G}_{1}\cdot \mathcal{G}_{2}=
\begin{pmatrix}
e^{-\eta\, (z_1+z_2)} & 0 & \eta\, e^{-\eta\, (z_1+z_2)} ( x_1 e^{\eta z_2}+x_2)\\
0 & e^{-\eta\, (z_1+z_2)} & \eta\, e^{-\eta\, (z_1+z_2)} ( y_1 e^{\eta z_2}+y_2) \\
0 & 0  & 1
\end{pmatrix}.\label{cr}
\ee
The coproducts for the  $(x,y,z)$-coordinates are derived from the group multiplication and read \cite{JDE16,BMR}
\be
\begin{array}{l}
\Delta_{\eta}(x)=x\otimes e^{\eta\, z}+1\otimes x=x_{1}e^{\eta\, z_{2}}+x_{2} ,\\
%%%%%
\Delta_{\eta}(y)=y\otimes e^{\eta\, z}+1\otimes y=y_{1}e^{\eta\, z_{2}}+y_{2} , \\
%%%%%%%%%
\Delta_{\eta}(z)=z\otimes 1+1\otimes z=z_{1}+z_{2} ,
\end{array} \label{defcr1}
\ee
where we have identified the coordinates of $\mathcal{G}_{1}$ with the tensor space at the left side of the tensor product, and the ones of $\mathcal{G}_{2}$ at the right one. 

As the deformed Poisson bracket is always quadratic in terms of the group entries, the application of the compatibility conditions stated by the fact that the deformed coproduct relations \eqref{defcr1} have to provide a Poisson map leads to the most generic PL structure on the book group, which was explicitly obtained in \cite{BBMbook}. 

From this result, and imposing that the linearisation of such a generic PL structure has to lead to the Poisson version of \eqref{eq:PoincareLie}, a straightforward computation leads to the unique solution given by the Poisson bracket
\be
\begin{array}{lll}
\{x,y\}_{\eta}=\dfrac{\eta}{2}\left(
y^{2}-x^{2}
\right), \quad&
\{x,z\}_{\eta}=y,\quad &
\{y,z\}_{\eta}=x.
\end{array} \label{defalg2}
\ee
It is straightforward to see that in the limit $\eta \to 0$ we recover the Poisson version of the $(1+1)$-dimensional Poincar\'e Lie algebra \eqref{eq:PoincareLie}, 
\be
\begin{array}{lll}
\lim\limits_{\eta \rightarrow 0}=\{x,y\}_{\eta}=\{x,y\}=0, & \lim\limits_{\eta \rightarrow 0}=\{x,z\}_{\eta}=\{x,z\}=y, &
\lim\limits_{\eta \rightarrow 0}=\{y,z\}_{\eta}=\{y,z\}=x .
\end{array}
\ee
The Casimir of \eqref{defalg2} is given by
\be
\mathcal{C}_{\eta}=e^{-\eta\, z}\left(
y^{2}-x^{2}
\right) ,
\label{casd2}
\ee
and again
\be
\lim\limits_{\eta \rightarrow 0}\mathcal{C}_{\eta}=\mathcal{C} = y^2 - x^2, 
\ee
which coincides with \eqref{eq:casA} up to a constant factor.

%%%%%%%%%%%%%%%%%%%%%%%%%%%%%%%%%%%%%%%%%%%%%
\subsubsection{The Heisenberg-Weyl deformation} 

The third Lie bialgebra structure~\eqref{h3} has a dual Lie algebra $\mathfrak{g}^{*}$ given by
\be
[\hat x,\hat y]=\eta\,\hat z, \qquad
[\hat x,\hat z]=0, \qquad
[\hat y,\hat z]=0,
\label{dalgh3}
\ee
which is isomorphic to the Heisenberg-Weyl algebra. 
Following a similar procedure as in the previous case, we start with the faithful representation 
\begin{equation}
\varrho(\hat z)=\left( 
\begin{array}{ccc}
0 & 0 & \eta\\
0 & 0 & 0\\
0 & 0 & 0
\end{array}
\right),  \qquad 
\varrho(\hat x)=\left( 
\begin{array}{ccc}
0 & \eta & 0\\
0 & 0 & 0\\
0 & 0 & 0
\end{array}
\right), \qquad
\varrho(\hat y)=\left( 
\begin{array}{ccc}
0 & 0 & 0\\
0 & 0 & \eta\\
0 & 0 & 0
\end{array}
\right) ,
\end{equation}
and taking a parametrisation of the Lie group $G^*$ defined by
\begin{equation}
\mathcal{G}_{1}=
\exp(x \varrho(\hat x) ) \exp(y \varrho(\hat y)) \exp(z \varrho(\hat z) )=
\left(
\begin{array}{ccc}
1 & \eta\,y &  \eta^2\,x y+ \eta\,z\\
0 & 1 &  \eta\,x\\
0 & 0 & 1
\end{array}
\right),
\end{equation}
the coproduct map can be straightforwardly computed from the matrix product $\mathcal{G}_{1}\cdot \mathcal{G}_{2}$, and it reads
\be
\begin{array}{l}
\Delta(x)=x \otimes 1 + 1 \otimes x =x_{1}+x_{2},\\
%%%%%%%%%%
\Delta(y)=y \otimes 1 + 1 \otimes y=y_{1}+y_{2},\\
%%%%%%%%%%%%
\Delta(z)=z \otimes 1 + 1 \otimes z - \eta\, x \otimes y=z_{1}+z_{2}-\eta\, y_{1}\,x_{2}.
\end{array}\label{cdh}
\ee
The compatible Poisson-Lie structure can be straightforwardly computed, and its fundamental Poisson brackets read 
\be
\begin{array}{lll}
\{x,y\}_{\eta}= 0, \quad&
\{x,z\}_{\eta}=\eta\, x+ y ,\quad &
\{y,z\}_{\eta}= x+\eta \,y.
\end{array} 
\label{defalgh3}
\ee
The Casimir $\mathcal{C}_{\eta}$ for \eqref{defalgh3} is given by
\be
\mathcal{C}_{\eta}=(y^2-x^2)^{1-\eta}(y-x)^{2\,\eta} .
\label{eq:Casdefalgh3}
\ee
Again, it is straightforward to check that in the limit $\eta \to 0$ we recover, up to a constant, the Poisson version of the $(1+1)$-dimensional Poincar\'e Lie algebra \eqref{eq:PoincareLie} and the Casimir function \eqref{eq:casA}.

\subsection{Bi-Hamiltonian deformations}

Integrable deformations of Case B coming independently from each of its two Lie-Poisson Hamiltonian structures could be obtained by a completely analogous procedure to the one presented above, and will be based on the complete classification of Lie bialgebra structures for the centrally extended (1+1) Poincar\'e and $so(3)$ algebras. For the sake of brevity we shall not discuss them in this paper in detail. However, Rikitake system B is bi-Hamiltonian and thus the existence of the Poisson pencil \eqref{pencilb} allows in principle to construct bi-Hamiltonian integrable deformations by following the prodedure introduced in~\cite{JDE16,BMR}. The essential point is that these deformations are possible if and only if there exist common cocycles $\delta$ for the two Lie algebras underlying the bi-Hamiltonian structure, namely  the centrally extended Poincar\'e and $so(3)$ Lie algebras. In the following we prove that Case B does not admit bi-Hamiltonian integrable deformations since this condition cannot be fulfilled. Nevertheless, we will also show how Case AB does admit such a common cocycle, and we will compute and analyse a particular bi-Hamiltonian deformation for this system.

%%%%%%%%%%%%%%%%%%%%%%%%%%%%%%%%%%%%%%%%%%%%%%%%%%%%%%%%%s
\begin{theorem}
The system 
 \begin{equation}
\begin{array}{lll}
\dot{x}=y (z + \beta), &\quad 
\dot{y}= x(z-\beta), & \quad 
\dot{z}=-x y ,
\end{array}
\label{Rikicaso2th}
\end{equation}
where $\beta \neq 0$, does not admit any bi-Hamiltonian Poisson-Lie integrable deformation. 
\end{theorem}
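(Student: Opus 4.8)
The strategy is to reduce the statement to a purely Lie-algebraic nonexistence result, via the criterion recalled just before the theorem (and established in~\cite{JDE16,BMR}): a bi-Hamiltonian Poisson--Lie deformation of \eqref{Rikicaso2th} exists if and only if the two underlying $4$D Lie algebras, the centrally extended Poincar\'e algebra $\mathfrak g_0$ \eqref{extendedP} and the centrally extended $so(3)$ algebra $\mathfrak g_1$ \eqref{extendedso3}, share a common nontrivial cocommutator, i.e.\ a single skewsymmetric map $\delta:\mathfrak g\to\mathfrak g\wedge\mathfrak g$ of the form \eqref{precoco} that is simultaneously a $1$-cocycle for both brackets. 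Thus it suffices to prove that the only such common cocycle is $\delta=0$. I would first write the most general $\delta$ in the basis $\{X,Y,Z,I\}$ (that is $4\times 6=24$ parameters) and exploit that $I$ is central in \emph{both} algebras: applying the $1$-cocycle condition i) to $[X,I]=[Y,I]=[Z,I]=0$ forces $\mathrm{ad}^{(2)}_W\delta(I)=0$ for all $W$, so $\delta(I)$ must be $\mathrm{ad}$-invariant; since $\mathfrak g\wedge\mathfrak g$ carries no nonzero $so(3)$-invariant element, this yields $\delta(I)=0$ at once.

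The key simplification is to impose the two cocycle conditions in the right order. Because $\mathfrak g_1=so(3)\oplus\mathbb R I$ has a semisimple summand, Whitehead's lemma gives $H^1(\mathfrak g_1;\mathfrak g\wedge\mathfrak g)=0$, so every $\mathfrak g_1$-cocycle is a coboundary: $\delta=\partial_1 r$, with $\delta(W)=\mathrm{ad}^{(2),1}_W r$ for a single $r\in\mathfrak g\wedge\mathfrak g$ (that is $6$ parameters). This collapses the problem from $24$ to $6$ unknowns. Now a coboundary is automatically a cocycle for its own bracket, so it only remains to require that $\partial_1 r$ be a cocycle for $\mathfrak g_0$ as well; and since the $1$-cocycle condition is \emph{linear} in the structure constants, this is equivalent to requiring that $\partial_1 r$ be a cocycle for the \emph{difference} of the two brackets, which is concentrated in $[X,Y]_0-[X,Y]_1=2\beta I-2Z$ and $[Z,X]_0-[Z,X]_1=-2Y$.

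The main obstacle, and the arithmetic heart of the argument, is the resulting overdetermined linear system for the six components of $r$. I expect the contributions proportional to $\beta$ (coming from the extra central term $2\beta I$ in $[X,Y]_0$) to be unbalanceable against any other term, so that for $\beta\neq 0$ every component of $r$ is forced to vanish; hence $\delta=\partial_1 r\equiv 0$ and no common cocycle exists. By the criterion above this shows that \eqref{Rikicaso2th} admits no bi-Hamiltonian Poisson--Lie integrable deformation. As a consistency check I would verify that switching off the obstruction, $\beta=0$, makes the $\beta$-terms disappear and leaves a nonzero solution space for $r$ --- precisely the common cocycle underlying Case AB and the pencil \eqref{pencil2} --- which confirms that the obstruction is genuinely caused by the nontrivial central extension $2\beta I$ with $\beta\neq0$.
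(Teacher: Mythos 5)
Your cohomological streamlining is attractive --- reducing the $24$-parameter cocycle problem to a $6$-parameter coboundary problem via $H^1(\mathfrak g_1;\mathfrak g\wedge\mathfrak g)=0$, and the ``difference bracket'' trick is legitimate since the cocycle condition is linear in the structure constants --- but there is a genuine gap at what you yourself call the arithmetic heart of the argument. The linear system you propose to solve does \emph{not} have only the trivial solution when $\beta\neq 0$. The paper solves exactly these cocycle equations (for the whole pencil, which is equivalent to imposing them for $\lambda=0$ and $\lambda=1$ simultaneously) and finds that a one-parameter family of common cocycles survives:
\begin{equation*}
\delta (X)=2 \eta \beta c\, Z \wedge W, \qquad \delta (Y)=\eta c\, X \wedge Y, \qquad \delta (Z)=\eta c\, (X \wedge Z - X \wedge W), \qquad \delta (W)=0 .
\end{equation*}
Since $(\mathfrak g\wedge\mathfrak g)^{\mathfrak g_1}=0$ makes the coboundary map $r\mapsto\partial_1 r$ injective, in your parametrization this family corresponds to a one-dimensional space of nonzero solutions $r$, so your expectation that the $\beta$-terms force $r=0$ is false, and the proof stalls. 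The missing ingredient is condition ii) of the Lie bialgebra definition: the dual map $\delta^\ast$ must define a Lie bracket on $\mathfrak g^\ast$, i.e.\ the co-Jacobi identity, which is \emph{quadratic} in $\delta$ and lies outside any purely linear analysis. For the residual family the dual brackets read $[\hat x,\hat y]=\eta c\,\hat y$, $[\hat x,\hat z]=\eta c\,\hat z$, $[\hat x,\hat w]=-\eta c\,\hat z$, $[\hat z,\hat w]=2\eta\beta c\,\hat x$, and the Jacobi identity on the triple $(\hat x,\hat z,\hat w)$ produces $2\eta^2\beta c^2\,\hat x=0$, forcing $c=0$ precisely when $\beta\neq 0$. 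So the obstruction caused by the central term $2\beta I$ acts at the quadratic (co-Jacobi) level, not at the linear (cocycle) level --- indeed at $\beta=0$ this same family does satisfy co-Jacobi, which is the correct version of your consistency check.

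A secondary, repairable inaccuracy: $\mathfrak g_1=so(3)\oplus\mathbb R I$ is reductive but not semisimple, so Whitehead's lemma does not literally yield $H^1(\mathfrak g_1;\mathfrak g\wedge\mathfrak g)=0$; in general $H^1(\mathfrak g_1;M)\cong H^1(so(3);M)\oplus M^{so(3)}$, because a cocycle may take an arbitrary invariant value on the central generator. Your separate observation that $\delta(I)$ must be $so(3)$-invariant, together with the decomposition $\mathfrak g\wedge\mathfrak g\cong\mathrm{adj}\oplus\mathrm{adj}$ as an $so(3)$-module (which has no invariants), is exactly the patch needed, so the coboundary reduction itself is sound once stated this way. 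The fix for the main gap is simply to append the co-Jacobi condition to your scheme: solve your $6$-unknown linear system, exhibit the one-dimensional solution space above, and then kill it with the dual Jacobi identity as the paper does.
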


\begin{proof}
System \eqref{Rikicaso2th} is bi-Hamiltonian with respect to the non-trivially centrally extended $(1+1)$-dimensional Lie-Poisson Poincar\'e algebra \eqref{extendedP} and to the trivially centrally extended $so(3)$ \eqref{extendedso3} Lie-Poisson algebra. The necessary and sufficient condition for the existence of bi-Hamiltonian integrable deformations of \eqref{Rikicaso2th} is the existence of a common, \emph{i.e.} independent of $\lambda$, cocommutator map $\delta$ for the Poisson pencil 
\be
[X,Y]_\lambda=(1-\lambda)\,2\,\beta\,W +\lambda\,2\,Z
\,\qquad
[Y,Z]_\lambda=X
, \qquad
[Z,X]_\lambda=(-1+2\,\lambda)\, Y
, \qquad
[ W ,\cdot ]_\lambda=0.
\ee
We set a generic pre-cocommutator map $\delta: \mathfrak g \to \mathfrak g \wedge \mathfrak g$
\be
\begin{array}{l}
\delta (X)=\eta\,(a_{1}\,X\wedge Y+a_{2}\, X\wedge Z+a_{3}\,X\wedge W+a_{4}\,Y\wedge Z+a_{5}\,Y\wedge W+a_{6}\,Z\wedge W), \\
%%%%%%%%
\delta (Y)=\eta\,(b_{1}\,X\wedge Y+b_{2}\, X\wedge Z+b_{3}\,X\wedge W+b_{4}\,Y\wedge Z+b_{5}\,Y\wedge W+b_{6}\,Z\wedge W), \\
%%%%%%%%
\delta (Z)=\eta\,(c_{1}\,X\wedge Y+c_{2}\, X\wedge Z+c_{3}\,X\wedge W+c_{4}\,Y\wedge Z+c_{5}\,Y\wedge W+c_{6}\,Z\wedge W), \\
%%%%%%%%
\delta (W)=\eta\,(d_{1}\,X\wedge Y+d_{2}\, X\wedge Z+d_{3}\,X\wedge W+d_{4}\,Y\wedge Z+d_{5}\,Y\wedge W+d_{6}\,Z\wedge W), \\
\end{array}
\ee
and we impose the cocycle condition
\be
\begin{array}{l}
\delta([X,Y]_{\lambda})=[1\otimes X+X\otimes 1,\delta(Y)]+[\delta(X),1\otimes Y+Y\otimes 1]=(1-\lambda)\,2\,\beta\,\delta(W) +\lambda\,2\,\delta(Z),\\
%%%%%%%%%%%
\\
%%%%%%%
\delta([Y,Z]_{\lambda})=[1\otimes Y+Y\otimes 1,\delta(Z)]+[\delta(Y),1\otimes Z+Z\otimes 1]=\delta(X),\\
%%%%%%%%%
\\
%%%%%%%%%%
\delta([Z,X]_{\lambda})=[1\otimes Z+Z\otimes 1,\delta(X)]+[\delta(Z)1\otimes X+X\otimes 1]=(-1+2\lambda)\delta(Y),\\
%%%%%%%
\\
%%%%%%%
\delta([X,W]_{\lambda})=[1\otimes X+X\otimes 1,\delta(W)]+[\delta(X),1\otimes W+W \otimes 1]=0,\\
%%%%%%%%
\\
%%%%%%%%%
\delta([Y,W]_{\lambda})=[1\otimes Y+Y\otimes 1,\delta(W)]+[\delta(Y),1\otimes W+W \otimes 1]=0,\\
%%%%%%%%%%%%%
\\
%%%%%%%%%
\delta([Z,W]_{\lambda})=[1\otimes Z+Z\otimes 1,\delta(W)]+[\delta(Z),1\otimes W+W \otimes 1]=0.
\end{array}
\ee
Solving these equations we obtain that
\be
\begin{array}{l}
\delta(X)=\eta\left( (1-2\lambda)b_{3}\, Y\wedge W +b_{4}\, X\wedge Z -b_{4}\dfrac{\beta(1-\lambda)}{\lambda} X\wedge W +(2\beta(1-\lambda)c_{2}-2\lambda c_{3}) Z\wedge W -c_{4}\, X\wedge Y \right),\\
%%%%%%%%%%%
\\
%%%%%%%%%%%
\delta(Y)=\eta\left( b_{3}\, X\wedge W +b_{4}\, Y\wedge Z -b_{4}\dfrac{\beta(\lambda-1)}{\lambda} Y\wedge W +b_{6} Z\wedge W +c_{2} X\wedge Y  \right),
\\
%%%%%%%%%%%
\\
%%%%%%%%%%
\delta(Z)=\eta\left( c_{2} X \wedge Z +c_{3} X \wedge W +c_{4} Y \wedge Z +\left( \dfrac{(2\lambda-1)}{2\lambda}b_{6}+\dfrac{\beta(1-\lambda)}{\lambda}c_{4} \right) Y \wedge W \right),\\
%%%%%%
\\
%%%%%%%
\delta(W)=0.
\label{eq:deltath419}
\end{array}
\ee
Since the cocommutator $\delta$ \eqref{eq:deltath419} should be defined for the complete Poisson pencil, including $\lambda=0$, we obtain that $b_{4}=0$, $b_{6}=0$, and $c_{4}=0$. Therefore we have
\be
\begin{array}{l}
\delta(X)=\eta\left(
(1-2\lambda)b_{3}\, Y \wedge W +(2\beta(1-\lambda)c_{2}-2\lambda c_{3}) Z \wedge W
\right),\\
%%%%%%%%%%%
\\
%%%%%%%%%%%
\delta(Y)=\eta\left(
b_{3}\, X \wedge W +c_{2} X \wedge Y 
\right) ,
\\
%%%%%%%%%%%
\\
%%%%%%%%%%
\delta(Z)=\eta\left(
c_{2} X \wedge Z +c_{3} X \wedge W \right),\\
%%%%%%
\\
%%%%%%%
\delta(W)=0.
\end{array}
\ee
The only remaining $\lambda$-independent solution is then obtained if $b_3=0$ and $c_3 = - \beta c_2$,
\begin{equation}
\delta (X)=2 \eta \beta c_2 Z \wedge W, \qquad \delta (Y)=\eta c_2 X \wedge Y, \qquad \delta (Z)=\eta c_2 (X \wedge Z - X \wedge W), \qquad \delta (W)=0.
\end{equation}
However, the co-Jacobi condition implies that the only cocommutator map is the trivial one $\delta (X)=\delta (Y)=\delta (Z)=\delta (W)=0$. Therefore, there are no non-trivial  bi-Hamiltonian Poisson-Lie integrable deformations of the system \eqref{Rikicaso2th}.

\end{proof}

%%%%%%%%%%%%%%%%%%%%%%%%%%%%%%%%%%%%%%%%%%%%%%%%%%%%%%%%%
\subsubsection{Bi-Hamiltonian deformation of the Rikitake AB system}

We have showed that Case A does not admit (non-trivial) bi-Hamiltonian deformations. However, when $\alpha=\mu=\beta=0$ (Case AB), the system \eqref{Rikicaso3} does admit such deformations. This case is given by the system of ODEs
\be
\begin{array}{lll}
\dot{x}= y z, &
\dot{y}=x z, &
\dot{z}=- x y .
\end{array}\label{RikiType}
\ee
This system is known to be bi-Hamiltonian respect the following two Hamiltonian structures:
\begin{enumerate}
\item[a)] First Hamiltonian structure:
The (1+1)-Poincar\'e Lie-Poisson algebra
\be
\begin{array}{lll}
\{x,y\}_{0}=0, & \{x,z\}_{0}=y, & \{y,z\}_{0}=x ,
\end{array}\label{Poinc}
\ee
where the Casimir function is given by $\mathcal{C}_{0}=\dfrac{1}{4}(y^2-x^2)$, together with the Hamiltonian function
\be
\mathcal{H}_{0}=\dfrac{1}{4}(x^2+y^2+2 z^2) .
\ee

\item[b)]  Second Hamiltonian structure:
The $\mathfrak{so}(3)$ Lie-Poisson algebra
\be
\begin{array}{lll}
\{x,y\}_{1}=2z, & \{x,z\}_{1}=-y, & \{y,z\}_{1}=x ,
\end{array}\label{so3}
\ee
with Casimir function given by $\mathcal{C}_{1}=\dfrac{1}{4}(x^2+y^2+2 z^2)$, together with the Hamiltonian 
\be
\mathcal{H}_{1}=\dfrac{1}{4}(y^2-x^2) .
\ee

\end{enumerate}

The Poisson pencil formed by these two Lie algebras reads 
\be
\begin{array}{lll}
[X,Y]_{\lambda}=2\,\lambda\,Z, &
[X,Z]_{\lambda}=(1-2\,\lambda)Y, &
[Y,Z]_{\lambda}=X .
\end{array}
\label{eq:pencil427}
\ee
It is easy to see that, in contradistinction to Case A, now there exist $\lambda$-independent common cocycles, like for instance, the one given in \eqref{eq:bookdelta},
\be
\delta(X)=\eta\,  X \,\wedge Z, \qquad \delta(Y)=\eta\,  Y\wedge Z, \qquad \delta(Z)=0 ,
\ee
whose dual Lie algebra is isomorphic to the `book' algebra \eqref{dalg1}, with Lie bracket
\be
\begin{array}{lll}
[\hat x,\hat y]=0, & [\hat x,\hat z]=\eta\, \hat x, & [\hat y,\hat z]=\eta\, \hat y .
\end{array}
\ee
The `book' Lie group can be embedded in $\mathrm{GL}(3, \mathbb R)$ as shown in \eqref{eq:Gbook}. Using this parametrisation, the coproduct $\Delta_{\eta}: \mathcal C^\infty (G^\ast) \to \mathcal C^\infty (G^\ast) \otimes \mathcal C^\infty (G^\ast)$ is given by \eqref{defcr1}. A standard computation based on \cite{BBMbook} shows that the unique Poisson structure on $G^\ast$ compatible with $\Delta_{\eta}$ with linearization given by \eqref{eq:pencil427} and therefore endowing $\mathcal C^\infty (G^\ast)$ with a Poisson-Hopf algebra is given by the fundamental Poisson brackets 
\be
\{x,y\}_{\lambda,\eta}=\dfrac{\eta^{2}[-x^2+y^2(1-2\lambda)]+2\lambda (e^{2\,\eta\,z}-1)}{2\,\eta}, \qquad
\{x,z\}_{\lambda,\eta}=(1-2\,\lambda) y, \qquad
\{y,z\}_{\lambda,\eta}=x .
\label{eq:poissonABlambda}
\ee
The Casimir function for this Poisson-Lie group structure is given by
\be
\mathcal{C}_{\lambda,\eta}=
\dfrac{e^{-\eta\,z}[\eta^{2}(-x^2+y^2(1-2\lambda))-2\lambda(e^{\eta\,z}-1)^2]}
{4\,\eta^{2}}.
\label{eq:CasimirABlambda}
\ee
The non deformed limit $\eta\rightarrow 0$ of this function is well defined
\be
\mathcal{C}_{\lambda,0}=\lim\limits_{\eta\rightarrow 0}\mathcal{C}_{\lambda,\eta}=\dfrac{1}{4}(-x^2+(1-2\lambda)y^2-2\,\lambda \, z)
\rightarrow
\begin{cases}
\lambda =0, & \mathcal{C}_{0,0}=\dfrac{1}{4}(y^2-x^2)\\
%%%%%% 
\lambda =1, & -\mathcal{C}_{1,0}=-\dfrac{1}{4}(x^2+y^2+2z^2).
\end{cases}
\ee
As expected, when $\lambda=0$ and $\lambda=1$, we obtain Poisson-Lie deformations of the Lie-Poisson (1+1)-Poincar\'e \eqref{Poinc} and $\mathfrak{so}(3)$ \eqref{so3} Lie algebras, respectively:

\begin{enumerate}

\item[a)] When $\lambda=0$, the deformed Poisson-Lie bracket becomes a deformation of the (1+1)-Poincar\'e algebra~\eqref{Poinc}, namely
\be
\begin{array}{lll}
\{x,y\}_{0,\eta}=\dfrac{\eta}{2}(y^2-x^2), &
\{x,z\}_{0,\eta}=y, &
\{y,z\}_{0,\eta}=x .
\end{array}
\label{eq:poissonABlambda0}
\ee
Note that the only deformation in the Poisson brackets is produced for the $\{x,y\}_{0,\eta}$ bracket whose $\eta\rightarrow 0$ limit vanishes. In this case the deformed Casimir \eqref{eq:CasimirABlambda} reduces to
\begin{equation}
\mathcal{C}_{0,\eta}=\dfrac{e^{-\eta\,z}}{4}(y^2-x^2),
\end{equation}
and finally, when the deformation parameter goes to zero, we recover the Casimir function $\mathcal{C}_{0} $ of the (1+1)-Poincar\'e algebra,  
\begin{equation}
\mathcal{C}_{0,0} = \lim\limits_{\eta\rightarrow 0}\mathcal{C}_{0,\eta}=\dfrac{1}{4}(y^2-x^2)=\mathcal{C}_{0} .
\end{equation}

\item[b)] When $\lambda=1$, the Poisson-Lie bracket \eqref{eq:poissonABlambda} is a deformation of the $so(3)$ algebra ~\eqref{so3}. Explicitly, we have
\be
\begin{array}{lll}
\{x,y\}_{1,\,\eta}=\dfrac{2(e^{2\,\eta\, z}-1)-\eta^{2}(x^2+y^2)}{2\,\eta}, &
\{x,z\}_{1,\,\eta}=-y, &
\{y,z\}_{1,\,\eta}=x .
\end{array}
\label{eq:poissonABlambda1}
\ee
Moreover, the Casimir function is found to be 
\be
\mathcal{C}_{1,\eta}=\dfrac{e^{-\eta\,z}(-\eta^2\,(x^2+y^2)-2(e^{\eta\,z}-1)^2)}{4\,\eta^{2}},
\ee
which, in the undeformed limit, coincides (modulo an irrelevant global sign) with the $\mathfrak{so}(3)$ Casimir function $\mathcal{C}_{1}$, namely
\be
\mathcal{C}_{1,0} = \lim\limits_{\eta\rightarrow 0}\mathcal{C}_{1,\eta}=-\dfrac{1}{4}(x^2+y^2+2z^2)=-\,\mathcal{C}_{1}.
\ee

\end{enumerate}

%%%%%%%%%%%%%%%%%%%%%%%%%%%%%%%%%%%%%%%%%%%%%%%%%%%%%%%%%
%%%%%%%%%%%%%%%%%%%%%%%%%%%%%%%%%%%%%%%%%%%%%%%%%%%%%%%%%
\section{Integrable deformations of the Rikitake system}
\setcounter{equation}{0}

In this Section we present explicitly the $\eta$-deformed dynamical systems arising from the Poisson-Lie structures obtained in the previous Section.

\subsection{Case A}

In this case, we recall that the Hamiltonian takes the form
\begin{equation}
\mathcal{H}^A=\dfrac{1}{2}(x^2+z^2)-\alpha\,\log(x+y) .
\label{eq:ham5A}
\end{equation} 
The undeformed system \eqref{IR} is obtained from the (1+1) Poincar\'e algebra \eqref{alg1}, as shown in Section \ref{sec:LiePoisson}. This system could be seen as a dynamical system defined on an Abelian (trivial cocommutator) Poisson-Lie group. Therefore $\eta$-deformations, or equivalently, non-Abelian dual Poisson-Lie structures, define integrable deformations of this system. In this case, both Poisson-Lie structures explicitly given in the previous Section define the following deformed Rikitake systems.

%%%%%%%%%%
\paragraph{$\bullet$ The ``book'' group deformation:}

The deformed equations coming from (\ref{defalg2}) and \eqref{eq:ham5A} read 
\be
\begin{split}
\dot{x}&=y z+\dfrac{\eta}{2}(x-y)\alpha, \\
\dot{y}&=x z+\dfrac{\eta}{2}(x-y)\left(x(x+y)-\alpha\right), \\
\dot{z}&=\alpha-x y.
\end{split}
\label{eq:bookdef}
\ee
A symplectic realisation of this deformed system corresponding to a value $\kappa$ of the Casimir function \eqref{casd2} is given by
\be
\begin{array}{lll}
x=e^{-\frac{\eta\,p}{2}}\sqrt{k}\sinh q, &
y=e^{-\frac{\eta\,p}{2}}\sqrt{k}\cosh q, &
z=p.
\end{array}
\label{eq:sympbookdef}
\ee
Note that in the $\eta \to 0$ limit we recover \eqref{eq:symplecAbook}, both in the dynamical system and the symplectic realisation. Closed orbits of this deformed system are given in Figure \ref{fig:book}.

\begin{figure}[h]
    	\includegraphics[width=8cm, height=5cm]{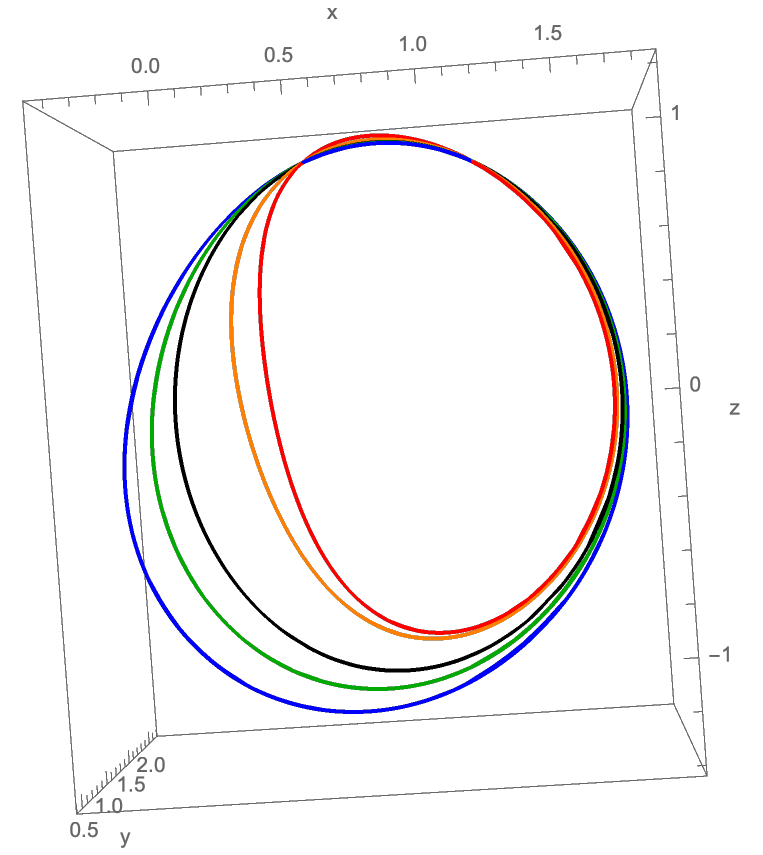}
	\includegraphics[width=8cm, height=5cm]{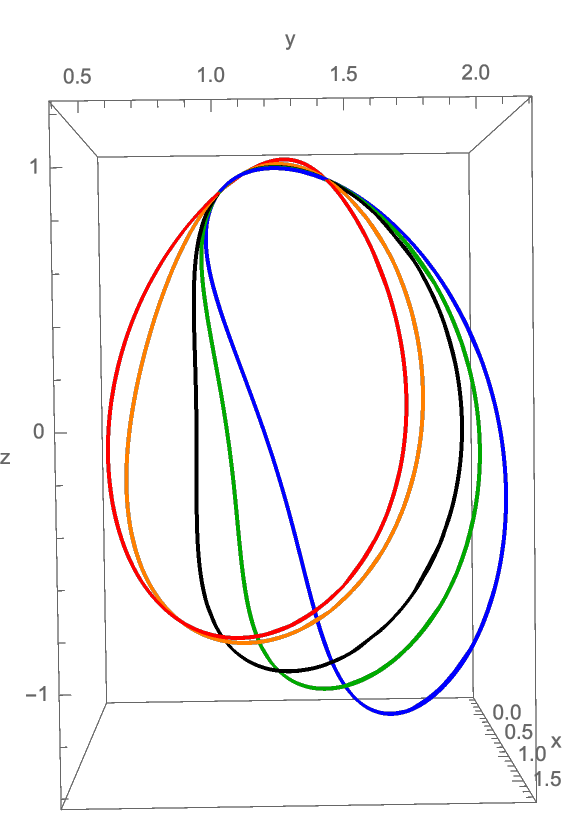}
    \caption{\label{fig:book}Projections $xz$ (left) and $yz$ (right) of the three-dimensional orbits of the deformed system \eqref{eq:bookdef} with $\eta= -0.5$  (blue), $\eta= -0.25$  (green), $\eta= 0$  (black), $\eta= 1$  (orange) and $\eta= 2$  (red). Initial conditions: $x(0)=0.5$, $y(0)=1$, $z(0)=1$.}
\end{figure}

\paragraph{$\bullet$ The Heisenberg-Weyl deformation:}

The deformed equations coming from the Poisson structure (\ref{defalgh3})  and the Hamiltonian function \eqref{eq:ham5A} are given by
\be
\begin{split}
\dot{x}&=\{x,\mathcal{H}\}_{\eta}= z\,(y+\eta\, x), \\
\dot{y}&=\{y,\mathcal{H}\}_{\eta}= z\, (x+\eta\, y), \\
\dot{z}&=\{z,\mathcal{H}\}_{\eta}=\alpha -x \, y+\eta(\alpha - x^{2}) .
\end{split}
\label{eq:heisdef}
\ee
A symplectic realisation of the Poisson algebra \eqref{defalgh3} with Casimir function \eqref{eq:Casdefalgh3} $\mathcal C_\eta = \kappa$ is given by
\be
\begin{array}{lll}
x=-e^{\eta\,q}\sqrt{k}\sinh q, & y=-e^{\eta\,q}\sqrt{k} \cosh q, & z=p.
\label{eq:sympheisdef}
\end{array}
\ee
The most significant difference between \eqref{eq:sympbookdef} and \eqref{eq:sympheisdef} is that in the former both $x$ and $y$ are functions of $q$ and $p$, while in the latter only functions of the $q$ variables appear.

\begin{figure}[h]
    	\includegraphics[width=8cm, height=5cm]{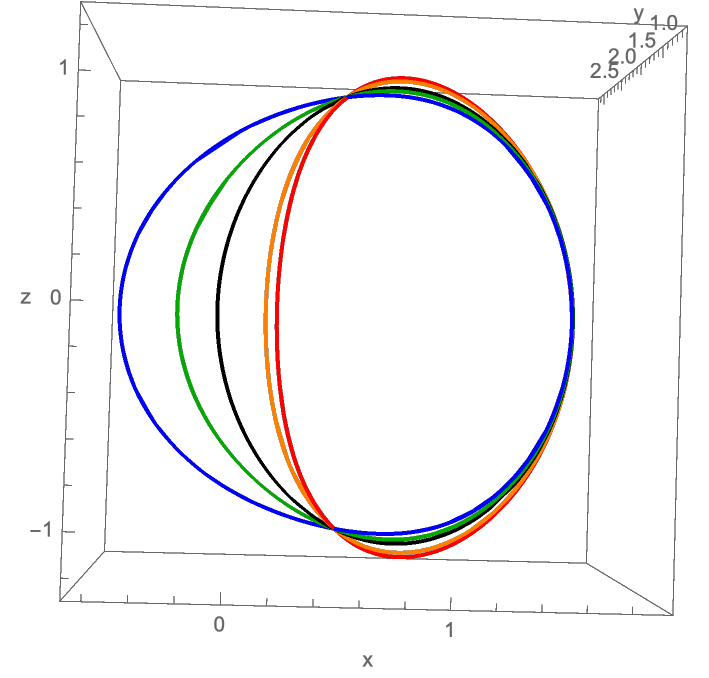}
	\includegraphics[width=8cm, height=5cm]{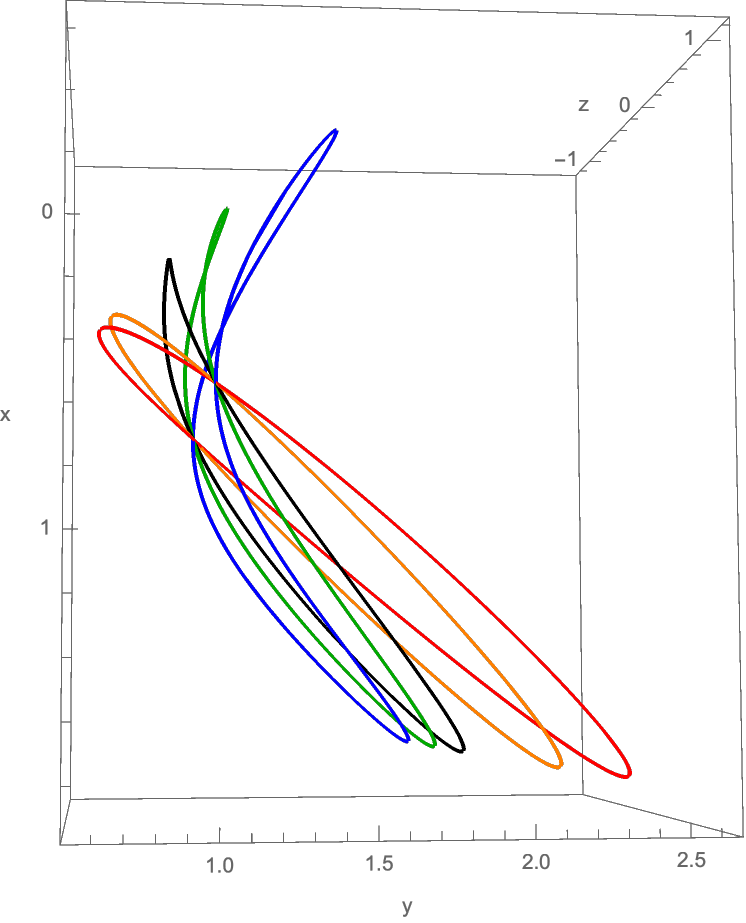}
    \caption{\label{fig:heiswyel}Projections $xz$ (left) and $yx$ (right) of the three-dimensional orbits of the deformed system \eqref{eq:heisdef} with $\eta= -0.5$  (blue), $\eta= -0.25$  (green), $\eta= 0$  (black), $\eta= 1$  (orange) and $\eta= 2$  (red). Initial conditions: $x(0)=0.5$, $y(0)=1$, $z(0)=1$.}
\end{figure}

Closed trajectories for this system are presented in Figure \ref{fig:heiswyel}. Note that trajectories with large positive values of the deformation parameter $\eta$ tend to be contained in a plane, while for large negative values of the deformation parameter $\eta$ their 3-dimensional nature is manifest. 

%%%%%%%%%%%%%%%%%%%%%%%%%%%%%%%%%%%%%%%%%%%%%%%%%%%%%%%%%

\subsection{Case AB}

If we want to consider the bi-Hamiltonian deformation, we have to take as Hamiltonian either
\begin{equation}
\mathcal{H}_{1,\eta}=\mathcal{C}_{0,\eta} =\dfrac{e^{-\eta\,z}}{4}(y^2-x^2) ,
\label{eq:h1c0last}
\end{equation}
and the Poisson algebra \eqref{eq:poissonABlambda1}, or equivalently, 
\be
\mathcal{H}_{0,\,\eta}=-\mathcal{C}_{1,\eta}=\dfrac{e^{-\eta\,z}(\eta^2\,(x^2+y^2)+2(e^{\eta\,z}-1)^2)}{4\,\eta^{2}},
\ee
and the Poisson algebra \eqref{eq:poissonABlambda0}. In both cases we recover the same dynamics, explicitly given by 
\be
\begin{array}{l}
\dot{x}=\{x,\mathcal{H}_{0,\eta}\}_{0,\eta} = \{x,\mathcal{H}_{1,\eta}\}_{1,\eta} =\dfrac{y\,e^{-\eta\,z} (-1+e^{2\,\eta\,z}-\eta^{2}\,x^{2})}{2\,\eta},\\
%%%%%%%%%%%%%%%
\\
%%%%%%%%%%%%%%%
\dot{y}=\{y,\mathcal{H}_{0,\eta}\}_{0,\eta} = \{y,\mathcal{H}_{1,\eta}\}_{1,\eta} =\dfrac{x\,e^{-\eta\,z} (-1+e^{2\,\eta\,z}-\eta^{2}\,y^{2})}{2\,\eta},
\\
%%%%%%%%%%%%%%%%
\\
%%%%%%%%%%%%%%%%
\dot{z}=\{z,\mathcal{H}_{0,\eta}\}_{0,\eta} = \{z,\mathcal{H}_{1,\eta}\}_{1,\eta} =-e^{-\eta\,z}x y.
\end{array}\label{e0}
\ee
Again, the non deformed limit leads to the non-deformed equations ~\eqref{RikiType}. A symplectic realization for this bi-Hamiltonian system is given by
\be
\begin{array}{l}
x=\dfrac{1}{\eta}\sqrt{\dfrac{2-4\lambda}{2\lambda-1}}
\sqrt{-(e^{\eta\, p}-1)^2\lambda-2\eta^2\,e^{\eta\,p}|k|}
\sinh (\sqrt{1-2\lambda}\,q ) ,\\
\\
y=\dfrac{2}{\eta\,\sqrt{2\lambda-1}}\sqrt{-e^{\eta\,p}(\eta^2\,|k|+\lambda(\cosh\eta \,p-1))}
\cosh (\sqrt{1-2\lambda}\,q) ,\\
\\
z=p,
\end{array}
\label{eq:sympreallambda59}
\ee
provided that $\lambda \neq 1/2$. That is, \eqref{eq:sympreallambda59} provides a family of parameterizations of the Poisson leaves of the deformed Poisson pencil \eqref{eq:poissonABlambda}.

\begin{figure}[h]
\hspace{-2cm}
    	\includegraphics[width=11cm, height=5cm]{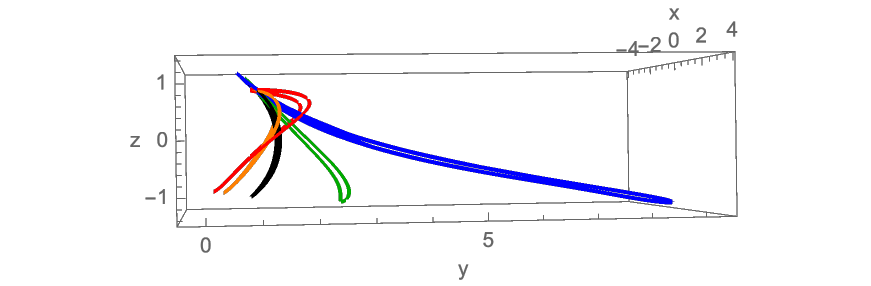}
	\hspace{-2cm}
	\includegraphics[width=11cm, height=5cm]{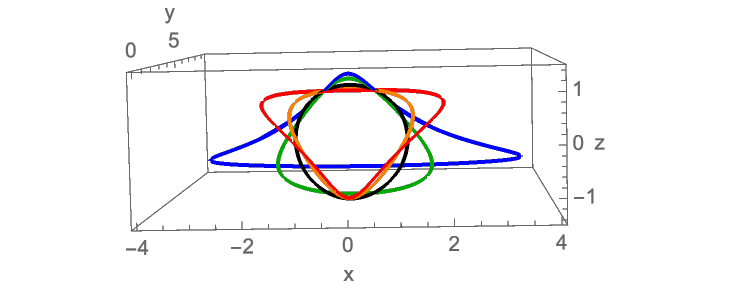}
    \caption{\label{fig:ab} Projections $yz$ (left) and $xz$ (right) of the three-dimensional orbits of the bi-Hamiltonian deformed system \eqref{e0} with $\eta= -2$  (blue), $\eta= -1$  (green), $\eta= 0$  (black), $\eta= 1$  (orange) and $\eta= 2$  (red). Initial conditions: $x(0)=0.5$, $y(0)=1$, $z(0)=1$.}
\end{figure}

It is interesting to comment on some aspects of the deformed dynamics of this system. Firstly, we note that although the deformed system \eqref{e0} is symmetric under the change $x \leftrightarrow y$, this symmetry is not manifest in the numerical integration shown in Figure \ref{fig:ab}. This is due to the fact that the initial conditions for $x$ and $y$ have to be different. In fact, as it is easy to see from the fact that \eqref{eq:h1c0last} is a constant of the motion, there cannot exist closed orbits of the system such that $x(0)=y(0)$, thus preventing the existence of orbits reflecting the $x \leftrightarrow y$ symmetry of the system. Secondly, a closer look at the third equation $\dot z = -e^{-\eta\,z}x y$ shows that if $\eta$ is large and positive, then when $z$ becomes positive, the (closed) trajectories will reach a turning point since $\dot z \to 0$. A similar situation appears when $\eta$ is large and negative, and  $z$ becomes negative. Both of these situations are clearly visible in the red and blue trajectories, respectively, from Figure \ref{fig:ab}.

%%%%%%%%%%%%%%%%%%%%%%%%%%%%%%%%%%%%%%%%%%%%%%%%%%%%%%%%%
%%%%%%%%%%%%%%%%%%%%%%%%%%%%%%%%%%%%%%%%%%%%%%%%%%%%%%%%%
\section{Coupled systems and cluster variables: the AB case}
\setcounter{equation}{0}

In order to perform a detailed analysis of the coupled systems let us introduce for simplicity the following change of variables for the deformed AB case (\ref{eq:poissonABlambda})
\be
\begin{array}{lll}
x'=e^{-\frac{\eta}{2}z}x, &\quad y'=e^{-\frac{\eta}{2}z}y,&\quad z'=z.\label{CV1}
\end{array}
\ee
Under the transformation (\ref{CV1}) the deformed Poisson brackets read
\be
\begin{array}{lll}
\{x',y'\}_{\lambda,\eta}=\dfrac{2\,\lambda\sinh(\eta\,z')}{\eta}, &
\{x',z'\}_{\lambda,\eta}=(1-2\,\lambda)y', & \{y',z'\}_{\lambda,\eta}=x'.
\end{array}\label{AbDefCV1}
\ee
Applying the transformation (\ref{CV1}) to the AB deformed Casimir function (\ref{eq:CasimirABlambda}) we arrive at the following expression
\be
\mathcal{C}_{\lambda,\eta}=-\dfrac{4\,\lambda (\cosh\,(\eta\,z' )-1)+\eta^{2}((x')^{2}+(2\lambda-1)(y')^2)} 
{4\,\eta^{2}} .
\label{eq:caslambdaeta}
\ee

Finally, we compute the $N=2$ coupled systems for both deformed structures ($\lambda=0, \lambda=1$) and we analyse how the bi-hamiltonian character is broken. From (\ref{eq:caslambdaeta}) and taking $\lambda=1$ we get the Hamiltonian function
\be
\mathcal{H}_{1,\,\eta}\equiv \mathcal{C}_{0,\eta}=\dfrac{1}{4}((y')^2-(x')^2).
\label{H1N}
\ee
Using the Poisson pencil \eqref{AbDefCV1} with $\lambda=1$, the dynamical system in this coordinates is given by
\be
\begin{split}
\dot{x}'&=\dfrac{y'\,\sinh(\eta\, z')}{\eta}, \\
\dot{y}'&=\dfrac{x'\,\sinh(\eta\, z')}{\eta}, \\
\dot{z}'&=-x'\,y',
\end{split}
\label{eq:origdyn}
\ee
and the additional invariant given by the deformed Poisson bracket reads 
\be
\mathcal{C}_{1,\eta}=-\dfrac{4(\cosh(\eta\,z')-1)+\eta^2 ((x')^2+(y')^2)}{4\,\eta^2} .
\ee
In order to compute the $N=2$ coupled system we have to take into account the coproduct~\eqref{defcr1}, which in the new variables~\eqref{CV1} becomes 
\be
\begin{array}{lll}
\Delta_{\eta}(x')=x'_{1}e^{\frac{\eta}{2}z'_{2}}+e^{-\frac{\eta}{2}z'_{1}}x'_{2}, &
\Delta_{\eta}(y')=y'_{1}e^{\frac{\eta}{2}z'_{2}}+e^{-\frac{\eta}{2}z'_{1}}y'_{2}, &
\Delta_{\eta}(z')=z'_{1}+z'_{2}.
\end{array}\label{Cop2}
\ee
 From~\eqref{Cop2} we can define the so called cluster variables \cite{BRcluster} defined through the coproduct
\be
x_{+}=x'_{1}e^{\frac{\eta}{2}z'_{2}}+e^{-\frac{\eta}{2}z'_{1}}x'_{2}, \qquad\qquad
y_{+}=y'_{1}e^{\frac{\eta}{2}z'_{2}}+e^{-\frac{\eta}{2}z'_{1}}y'_{2}, \qquad\qquad
z_{+}=z'_{1}+z'_{2}.
\label{CV2}
 \ee
Using the coproduct \eqref{Cop2} of the Hamiltonian ~\eqref{H1N}, \emph{i.e.}
 \be
 \mathcal{H}^{(2)}_{1\,\eta}:=\Delta_{\eta}(\mathcal{H}_{1,\,\eta})=\dfrac{1}{4}\left(\Delta_{\eta}\left( (y')^{2} \right)-\Delta_{\eta} \left( (x')^{2} \right) \right) ,
 \ee
as the Hamiltonian for the $N=2$ coupled system in the $\lambda=1$ case, the set of coordinates $\{x_+,y_+,z_+,x_1,y_1,z_1\}$ lead to the following system of coupled ODEs:
\be
 \begin{array}{ll}
 \begin{cases}
 \dot{x}_{+}=\dfrac{y_{+}\,\sinh(\eta\, z_{+})}{\eta}\\
 %%%%%
 \\
 %%%%%
\dot{y}_{+}=\dfrac{x_{+}\,\sinh(\eta\, z_{+})}{\eta}\\
%%%%%
\\
%%%%%%
\dot{z}_{+}=-x_{+}\,y_{+}
 \end{cases}
 \hspace{0.5 cm}
 \begin{cases}
 \dot{x}'_{1}=\dfrac{\eta\, y'_{1}}{4}(y_{+}^{2}-x_{+}^{2})+\dfrac{e^{\frac{\eta}{2}(z_{+}-z'_{1})}} {4\,\eta}(4y_{+}\,\sinh(\eta\,z'_{1})+\eta^{2}\,y'_{1}(x_{+}x'_{1}-y_{+}y'_{1}))\\
 %%%%%%%
\\
%%%%%%%
\dot{y}'_{1}=\dfrac{\eta\,x'_{1}}{4}(x_{+}^{2}-y_{+}^{2})+\dfrac{e^{\frac{\eta}{2}(z_{+}-z'_{1})}} {4\,\eta}(4x_{+}\,\sinh(\eta\,z'_{1})+\eta^{2}\,x'_{1}(-x_{+}x'_{1}+y_{+}y'_{1}))\\
%%%%%%%%
\\
%%%%%%%
\dot{z}'_{1}=-\dfrac{1}{2}e^{\frac{\eta}{2}\,(z_{+}-z'_{1})}(x_{+}y'_{1}+y_{+}x'_{1}) .
 \end{cases}
 \end{array}
 \ee
 Therefore we note that the dynamics for the cluster variables $\{x_+,y_+,z_+\}$ is exactly the same as the original system, while the dynamics for the other variables ($\{x_1,y_1,z_1\}$ in this case) is much more involved. It should be noted that this is a general feature of our formalism: cluster variables give a set of coordinates (canonically defined by means of the coproduct of the original variables) in which we recover a subsystem with identical dynamics to the original one. However, the ``internal dynamics'' of the coupled system is much more intricate, and this is reflected in the evolution for the rest of the coordinates we choose, which involves the six variables.

A similar construction can be performed for the $\lambda=0$ case. From (\ref{AbDefCV1}) we have the following Poisson-Lie brackets
 \be
\begin{array}{lll}
\{x',y'\}=0, & \{x',z'\}=y', & \{y',z'\}=x'.
\end{array}
\ee
In this case the Hamiltonian function is given by
 \be
 \mathcal{H}_{0,\,\eta}\equiv-\mathcal{C}_{1,\eta}=\dfrac{4(\cosh (\eta\,z' )-1)+\eta^2 \left( (x')^2+(y')^2 \right)}{4\,\eta^2} ,
 \label{eq:H2N}
 \ee
and the additional invariant for the Poisson-Lie algebra is
\be
\mathcal{C}_{0,\eta}=\dfrac{1}{4}\left( (x')^2-(y')^2 \right).
\ee
The equations for the one-copy system are the same as in the $\lambda=1$ case \eqref{eq:origdyn}, \emph{i.e.}
\be
\begin{split}
\dot{x}'&=\dfrac{y'\,\sinh(\eta\, z')}{\eta}, \\
\dot{y}'&=\dfrac{x'\,\sinh(\eta\, z')}{\eta}, \\
\dot{z}'&=-x'\,y'.
\end{split}
\ee
As we already know, the one-copy system is bi-Hamiltonian. This will clearly change when we consider the $N=2$ coupled system defined by the deformed coproduct (\ref{Cop2}) of the Hamiltonian function \eqref{eq:H2N}, namely
\be
 \mathcal{H}^{(2)}_{0\,\eta}=\Delta_{\eta} (\mathcal{H}_{0,\,\eta})=\dfrac{4(\cosh(\eta\,\Delta_{\eta} (z'))-1))+\eta^2 (\Delta_{\eta} (x'^2)+\Delta_{\eta} (y'^2))} 
 {4\,\eta^2} .
\ee

Using again the set of coordinates $\{x_+,y_+,z_+,x_1,y_1,z_1\}$, the dynamical system reads
\be
 \begin{array}{ll}
 \begin{cases}
 \dot{x}_{+}=\dfrac{y_{+}\,\sinh(\eta\, z_{+})}{\eta}, \\
 %%%%%
 \\
 %%%%%
\dot{y}_{+}=\dfrac{x_{+}\,\sinh(\eta\, z_{+})}{\eta},\\
%%%%%
\\
%%%%%%
\dot{z}_{+}=-x_{+}\,y_{+} ,
 \end{cases}
 \hspace{0.5 cm}
 \begin{cases}
 \dot{x}'_{1}=\dfrac{y'_{1}}{4\,\eta}\left(
 4\,	\sinh(\eta\,z_{+})+e^{\frac{\eta}{2}(z_{+}-z'_{1})}\,\eta^{2}(x_{+}x'_{1}+y_{+}y_{1})-\eta^{2}(x_{+}^{2}+y_{+}^{2})
 \right) ,
 \\
 %%%%%%%
\\
%%%%%%%
\dot{y}'_{1}=\dfrac{x_{1}}{4\,\eta}\left(
 4\,	\sinh(\eta\,z_{+})+e^{\frac{\eta}{2}(z_{+}-z'_{1})}\,\eta^{2}(x_{+}x'_{1}+y_{+}y'_{1})-\eta^{2}(x_{+}^{2}+y_{+}^{2})
 \right) ,
\\
%%%%%%%%
\\
%%%%%%%
\dot{z}'_{1}=-\dfrac{1}{2}e^{\frac{\eta}{2}\,(z_{+}-z'_{1})}(x_{+}y'_{1}+y_{+}x'_{1}) .
 \end{cases}
 \end{array}
 \ee
As we can straightforwardly see, whilst the subsystem defined by $\{x_+,y_+, z_+\}$ is common for the two $\lambda$-cases (as it should be, since this is a direct consequence of the definition of the cluster variables), the equations for the variables $\{{x}_{1},{y}_{1},{z}_{1}\}$ are different for $\lambda=0$ and $\lambda=1$. Therefore, the coupling procedure defined by means of the Poisson-Lie group structure breaks the initial bi-Hamiltonian character of the system.

%%%%%%%%%%
\section{Concluding remarks}
\setcounter{equation}{0}
%%%%%%%%%%

Given any integrable Hamiltonian dynamical system defined in terms of a Lie-Poisson algebra and a Hamiltonian function, the formalism presented in this paper provides a systematic and constructive method to obtain certain integrable deformations of such system that can be generalized to nontrivially coupled versions of it. In particular,  for each Lie bialgebra structure $(\mathfrak g, \delta)$ of the Lie algebra underlying the Poisson-Lie bracket of the initial integrable system, a different integrable deformation can be constructed. The deformed system will be Hamiltonian with respect to a Poisson-Lie structure on the dual Lie group $G^\ast$ whose Lie algebra $\mathfrak g^\ast$ is defined by dualizing the Lie bialgebra map $\delta$, and constants of the motion for the deformed system will be given by the deformed Casimir functions of such Poisson-Lie structure. In this way, the dynamical variables for the deformed system are just the local coordinates for the dual Lie group $G^\ast$. Finally, the coupled system and its constants of the motion will be obtained by making use of the fact that the group multiplication on $G^\ast$ is a Poisson map for the Poisson-Lie structure on $G^\ast$ that defines the deformation. 

Some instances of integrable cases of the Rikitake family of dynamical systems have been considered in order to illustrate this completely generic framework and to emphasise the role of Lie bialgebras and Poisson-Lie groups in dynamical systems theory. In particular, the A system is defined onto the Lie-Poisson (1+1) Poincar\'e algebra, whose complete classification of Lie bialgebra structures is well-known and consists of six different non-trivial cases~\cite{Gomez2000}. Among them, we have considered two representative cases whose dual Lie groups $G^\ast$ are the `book' group and the Heisenberg-Weyl group. A Poisson-Lie structure deforming the Lie-Poisson Poincar\'e algebra can be constructed on each of these two Lie groups, and the two associated integrable deformations of the A system have been explicitly constructed and studied. We stress that, in general, the classification of Lie bialgebra structures (and, therefore, of dual Poisson-Lie group brackets) of a given Lie algebra $\mathfrak g$ is by no means a trivial problem, which is only fully solved for 3D (both complex and real) Lie algebras~\cite{Gomez2000}.

On the other hand, we have also considered the bi-Hamiltonian Rikitake system B in order to study the possibility of obtaining bi-Hamiltonian deformations under the abovementioned framework. This implies the existence of a common cocommutator map $\delta$ for the two Lie algebra structures associated to the bi-Hamiltonian structure of the B system. In this case the answer to this question turns out to be negative, and shows that the preservation of a bi-Hamiltonian structure for Poisson-Lie deformations imposes strong constraints on the formalism. Nevertheless, the particular Rikitake case AB has been shown to admit such a common Lie bialgebra map $\delta$ for the two underlying Lie algebras, and the bi-Hamiltonian deformation can be explicitly constructed. 

We have also provided some numerical simulations for the dynamics of the three deformed Rikitake systems. As expected, all of them present closed trajectories due to their integrability properties, and in general we can see that for negative values of the deformation parameter $\eta$ the difference between the orbits of the deformed and undeformed systems are larger than for positive values of $\eta$. This is quite natural since the deformations do not present the $\eta\to -\eta$ symmetry.

Finally, case AB has been used to exemplify the construction of deformed coupled Rikitalke systems. We have studied the dynamics of such coupled systems, which becomes much more transparent by making use of the so-called `cluster variables'. These are collective variables defined as the deformed coproduct map ({\em i.e.} the group multiplication law on $G^\ast$ for the local coordinates) and the dynamics of the coupled system is such that these cluster variables evolve under the same equations as the dynamical variables of the uncoupled deformed system. At this point it is worth stressing that the construction of coupled systems can be generalized to an arbitrary number $N$ of copies~\cite{BR, BRcluster} just by considering the $N$-th coproduct map $\Delta^{N}$ ({\em i.e.} multiplication of $N$ elements of the $G^\ast$ group) and in that case the global collective variables defined by $\Delta^{N}$ will again reproduce the dynamics of a single deformed system. This generalization to $N$ coupled copies is ensured by construction due to the underlying group structure, and shows that systems obtained through Poisson-Lie deformations constitute an exceptional class of nonlinear dynamical systems.

%%%%%%%%

\section*{Acknowledgements}

This work has been partially supported by Agencia Estatal de Investigaci\'on (Spain) under grant PID2019-106802GB-I00/AEI/10.13039/501100011033. I. Gutierrez-Sagredo thanks Universidad de La Laguna where part of the work has been done for the hospitality and support.

%%%%%%%%

\end{document}